\theoremstyle{plain}
\newtheorem{theorem}{Theorem}
\theoremstyle{definition}
\newtheorem{definition}[theorem]{Definition}
\theoremstyle{remark}
\newcommand{\norm}[1]{\left\lVert#1\right\rVert}
\author{
  {\normalsize Qinmeng Zou}\thanks{CentraleSup\'elec, Universit\'e Paris-Saclay, France.}
  \and
  {\normalsize Fr\'ed\'eric Magoul\`es}\thanks{CentraleSup\'elec, Universit\'e Paris-Saclay, France
    (correspondence, frederic.magoules@hotmail.com).}
		}
\title{A New Cyclic Gradient Method Adapted to Large-Scale Linear Systems}
\date{}
\begin{document}
\maketitle
\thispagestyle{fancy}

\begin{abstract}
\noindent This paper proposes a new gradient method to solve the large-scale problems.
Theoretical analysis shows that the new method has finite termination property for two dimensions and converges R-linearly for any dimensions.
Experimental results illustrate first the issue of parallel implementation.
Then, the solution of a large-scale problem shows that the new method is better than the others, even competitive with the conjugate gradient method.
\end{abstract}

\begin{keywords}
gradient methods; linear systems; large-scale problems; Barzilai-Borwein methods
\end{keywords}

\section{Introduction}

We are interested in investigating new gradient methods for the solution of linear system
\begin{equation}
\label{eq:ls}
Ax=b,
\end{equation}
where $A\in\mathbb{R}^{n\times n}$ is symmetric positive definite (SPD) and $b\in\mathbb{R}^n$.
This problem is equivalent to the minimization of a convex quadratic function
\begin{equation}
\label{eq:cq}
f(x) = \frac{1}{2}x^\intercal Ax - b^\intercal x.
\end{equation}
Gradient methods generate a sequence of the form
\begin{equation}
\label{eq:grd}
x_{k+1} = x_k - \alpha_k g_k,\quad k = 0, 1, \dots,
\end{equation}
where $g_k = Ax_k - b$.
It is well known that the steepest descent (SD) method \cite{Cauchy1847} performs poor in most cases, where the steplength can be written as follows
\begin{equation}
\label{eq:sd}
\alpha_k^\text{SD} = \frac{g_k^\intercal g_k}{g_k^\intercal Ag_k}.
\end{equation}
The iterates generated tend to asymptotically alternate between two directions \cite{Akaike1959}.
In contrast, the conjugate gradient (CG) method \cite{Hestenes1952} is often the method of choice that will terminate in at most $n$ iterations.
It is very attractive because of its high efficiency and low storage requirement.
Nonetheless, CG iteration depends strongly on the search of direction calculation, i.e., any derivation such as round-off errors can seriously degrade performance \cite{Fletcher2005}.

In the past several decades, a renewed interest for gradient methods has appeared since Barzilai and Borwein \cite{Barzilai1988} proposed two efficient nonmonotone steplengths
\begin{equation}
\label{eq:bb1}
\alpha_k^\text{BB1} = \frac{g_{k-1}^\intercal g_{k-1}}{g_{k-1}^\intercal A g_{k-1}}.
\end{equation}
\begin{equation}
\label{eq:bb2}
\alpha_k^\text{BB2} = \frac{g_{k-1}^\intercal Ag_{k-1}}{g_{k-1}^\intercal A^2 g_{k-1}}.
\end{equation}
The motivation consists in approximating the Hessian and imposing some quasi-Newton properties.
Some theories and experiments have shown that BB methods have good performance and are competitive with CG methods when low accuracy is required or small perturbation exists \cite{Fletcher2005}.
The convergence has been proven by Raydan \cite{Raydan1993}.
Furthermore, Friedlander et al. \cite{Friedlander1999} provided a general framework under the name of ``gradient method with retards'' that SD and BB both belong to it, as well as several alternate methods proposed later \cite{Dai2003a, Dai2003b, Zhou2006}.

Motivated by the two-dimensional finite termination property, Yuan \cite{Yuan2006} provided a somewhat complicated steplength
\begin{equation}
\label{eq:yuan}
\alpha_k^\text{Y} = \frac{2}{\sqrt{\left(\frac{1}{\alpha_{k-1}^\text{SD}} - \frac{1}{\alpha_k^\text{SD}}\right)^2 + \frac{4g_k^\intercal g_k}{s_{k-1}^\intercal s_{k-1}}} + \frac{1}{\alpha_{k-1}^\text{SD}} + \frac{1}{\alpha_k^\text{SD}}},
\end{equation}
where $s_{k-1} = x_k - x_{k-1}$.
He gave two algorithms and some variants were investigated further by Dai and Yuan \cite{Dai2005c}.
Among these methods, the second variant (DY) is the most efficient one according to the experiments in \cite{Dai2005c}, where iterates are generated of the form
\begin{equation}
\label{eq:dy}
\alpha_k^\text{DY} =
\begin{cases}
\alpha_k^\text{SD}, & k\bmod 4 < 2, \\
\alpha_k^\text{Y}, & \text{otherwise}.
\end{cases}
\end{equation}

In this paper, we address the properties of cyclic gradient methods, especially their parallel behavior.
We propose a new algorithm based on the Yuan steplength, which has also the two-dimensional finite termination property.
In the next section, we introduce the cyclic gradient methods and propose our new steplength.
In Section~\ref{sec:3}, we give the convergence results of the new method.
Some numerical results are presented in Section~\ref{sec:4}.
Finally, a concluding remark is shown in Section~\ref{sec:5}.

\section{Cyclic Gradient Methods}
\label{sec:2}

Friedlander et al. \cite{Friedlander1999} proposed an ingenious framework that gives rise to a great number of potentially efficient algorithms.
Firstly, assume that $m\in\mathbb{N}$ represents retard that allows to employ the information from previous iterations.
Let
\begin{equation}
\bar{k} = \max\{0,\ k-m\},
\end{equation}
then a collection of possible choices of steplength can be set as follows
\begin{equation}
\label{eq:gmr}
\alpha_k^\text{GMR} = \frac{g_{\tau(k)}^\intercal A^{\rho(k)}g_{\tau(k)}}{g_{\tau(k)}^\intercal A^{\rho(k)+1}g_{\tau(k)}},
\end{equation}
where
\begin{equation}
\tau(k)\in\left\{\bar{k},\ \bar{k}+1,\ \dots,\ k-1,\ k\right\},
\end{equation}
and
\begin{equation}
\rho(k)\in\left\{q_1,\ \dots,\ q_m\right\},\quad q_j \ge 0,
\end{equation}
where $k\in\mathbb{N}$.
The next theorem summarizes the convergence result in \cite{Friedlander1999}.
\begin{theorem}[Friedlander et al., 1999]
\label{thm:Friedlander1999}
Consider the linear system \eqref{eq:ls} with $A\in\mathbb{R}^{n\times n}$ is SPD and $b\in\mathbb{R}^n$, where $x_*=A^{-1}b$ is the exact solution.
Consider the gradient method \eqref{eq:grd} being used to solve \eqref{eq:ls} and the steplength $\alpha_k$ given by \eqref{eq:gmr}.
Then the sequence $\{x_k\}$ converges to $x_*$ starting from any point $x_0$.
\end{theorem}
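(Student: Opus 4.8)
The plan is to follow the classical technique used by Raydan \cite{Raydan1993} for the Barzilai--Borwein analysis, adapted to the general retard framework \eqref{eq:gmr}. First I would diagonalize: write $A = Q\Lambda Q^\intercal$ with $\Lambda = \mathrm{diag}(\lambda_1,\dots,\lambda_n)$, $0 < \lambda_1 \le \dots \le \lambda_n$, put $e_k = x_k - x_*$ so that $g_k = Ae_k$, and pass to the rotated coordinates $d_k = Q^\intercal g_k$, with components $d_k^{(i)}$. Since $g_{k+1} = (I-\alpha_k A)g_k$, the iteration \eqref{eq:grd} decouples componentwise into $d_{k+1}^{(i)} = (1-\alpha_k\lambda_i)\,d_k^{(i)}$, hence $d_k^{(i)} = d_0^{(i)}\prod_{j=0}^{k-1}(1-\alpha_j\lambda_i)$, and proving $x_k\to x_*$ is equivalent to proving $d_k^{(i)}\to 0$ for every $i$ (then $e_k = A^{-1}Q d_k \to 0$).

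The first key step is that the steplength is always a weighted average of the reciprocals of the eigenvalues: since $g_{\tau(k)}^\intercal A^{\rho}g_{\tau(k)} = \sum_i \lambda_i^{\rho}(d_{\tau(k)}^{(i)})^2$, formula \eqref{eq:gmr} reads $\alpha_k = \frac{\sum_i \lambda_i^{\rho(k)}(d_{\tau(k)}^{(i)})^2}{\sum_i \lambda_i^{\rho(k)+1}(d_{\tau(k)}^{(i)})^2}$, which for every admissible $\rho(k)\ge 0$ lies in $[\,1/\lambda_n,\ 1/\lambda_1\,]$. In particular $0 \le 1-\alpha_k\lambda_1 \le 1-\lambda_1/\lambda_n < 1$ (the case $\lambda_1=\lambda_n$ being trivial, as then $\alpha_k = 1/\lambda_1$ and $x_1 = x_*$), hence $|d_k^{(1)}| \le |d_0^{(1)}|\,(1-\lambda_1/\lambda_n)^k \to 0$; this is the base case. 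I would then induct on the distinct eigenvalues in increasing order: assuming $d_k^{(i)}\to 0$ for every $i$ whose eigenvalue is among the $\ell-1$ smallest distinct values $\sigma_1<\dots<\sigma_{\ell-1}$, split the numerator and denominator of $\alpha_k$ into a ``low'' part (eigenvalues $<\sigma_\ell$) and a ``high'' part (eigenvalues $\ge\sigma_\ell$). Because $\tau(k)\ge \bar k = k-m\to\infty$ and only finitely many indices are ``low'', the low part of $g_{\tau(k)}$ tends to $0$, which forces $\alpha_k \le \tfrac{1}{\sigma_\ell}\bigl(1+o(1)\bigr)$; together with $\alpha_k\ge 1/\lambda_n$ this gives $|1-\alpha_k\sigma_\ell| \le 1-\delta$ for all large $k$ and some fixed $\delta>0$, so the tail product $\prod_j(1-\alpha_j\sigma_\ell)$ tends to $0$ and $d_k^{(i)}\to 0$ whenever $\lambda_i=\sigma_\ell$. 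After finitely many such steps all components vanish, i.e. $g_k\to 0$, whence $x_k\to x_*$.

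The hard part is making the inductive step rigorous, because the estimate $\alpha_k \le \tfrac{1}{\sigma_\ell}(1+o(1))$ is only valid when the ``high'' part of $g_{\tau(k)}$ does not itself decay too fast, so one has to treat the degenerate scenario in which $g_{\tau(k)}$ becomes small, and one has to control the steplength uniformly over the whole retard window $\tau(k)\in\{\bar k,\dots,k\}$ rather than at a single index. The standard device is to combine the above with the elementary estimate $\|g_{k}\| \le c^{\,m}\,\|g_{\tau(k)}\|$, where $c = \max_{i,j}|1-\alpha_j\lambda_i| < \infty$ depends only on $A$: this shows that as long as the component we are trying to annihilate is not yet small, the corresponding gradient somewhere in the window is bounded below, which is exactly what keeps $\alpha_k\sigma_\ell$ bounded away from $2$; the geometric decrease of that component then follows, and the argument can in fact be sharpened to give R-linear convergence of each eigenvalue-group once the preceding groups have decayed. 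I would therefore structure the proof as two lemmas (boundedness of $\alpha_k$, and convergence of the smallest-eigenvalue component) followed by the induction, modelled on \cite{Raydan1993,Friedlander1999}.
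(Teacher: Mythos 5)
The paper does not actually prove this theorem itself---it defers to \cite{Friedlander1999}---and your outline is essentially the argument of that reference: Raydan's eigencomponent induction (base case from $\alpha_k\in[1/\lambda_n,1/\lambda_1]$, then induction over the distinct eigenvalues), extended to the retard window through the bounded per-step growth factor. Your plan is correct, including your identification of the genuinely delicate point (keeping $\alpha_k\sigma_\ell$ bounded away from $2$ when the ``high'' part of $g_{\tau(k)}$ may be small) and the standard device for it; the only refinement I would insist on is that the growth estimate be stated componentwise, $|d_k^{(\ell)}|\le \max(1,c)^{m}\,|d_{\tau(k)}^{(\ell)}|$, since what the induction needs is a lower bound on $|d_{\tau(k)}^{(\ell)}|$ in terms of $|d_k^{(\ell)}|$, which the norm inequality $\|g_k\|\le c^m\|g_{\tau(k)}\|$ does not by itself provide.
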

For a proof of the above theorem, see \cite{Friedlander1999}.
Incidentally, several potential algorithms were provided therein, including the first cyclic gradient method under the name of cyclic steepest descent (CSD) as suggested in \cite{Dai2003a}, which can be summarized as follows
\begin{equation}
\label{eq:csd}
\alpha_k^\text{CSD} =
\begin{cases}
\alpha_k^\text{SD}, & k\bmod m = 0, \\
\alpha_{k-1}, & \text{otherwise}.
\end{cases}
\end{equation}
Notice that if we choose $\rho(k) = 0$ and $\tau(k) = \bar{k}+1,\ \dots,\ k-1,\ k$, then \eqref{eq:gmr} becomes CSD method, which satisfies the Theorem~\ref{thm:Friedlander1999}.
On the other hand, Dai \cite{Dai2003a} proposed a variant called cyclic Barzilai-Borwein (CBB) method.
They suggested that
\begin{equation}
\label{eq:cbb}
\alpha_k^\text{CBB} =
\begin{cases}
\alpha_k^\text{BB1}, & k\bmod m = 0, \\
\alpha_{k-1}, & \text{otherwise}.
\end{cases}
\end{equation}
Similarly, if we choose $\rho(k) = 0$ and $\tau(k) = \bar{k},\ \bar{k}+1,\dots,\ k-1$, then \eqref{eq:gmr} becomes CBB method.

Although these methods greatly speed up the convergence, their motivation is too straightforward to further accelerate the iterations, which relies on the nonmonotone property to search the whole space without sink into any lower subspace spanned by eigenvectors \cite{Fletcher2005}.
This allows to reduce the gradient components more or less in the same asymptotic rate \cite{Dai2005c}.

The recent literature showed that Yuan steplength may lead to efficient algorithms \cite{Yuan2006, Dai2005c}.
All methods therein have two-dimensional finite termination property, i.e., if \eqref{eq:dy} is applied to a linear system in two-dimensional space, then the algorithm will terminate in at most 3 iterations.
In general, such property seems not attractive in practice.
However, experiments showed that they perform well in higher dimensions and are competitive with BB methods for large-scale problems \cite{Dai2005c}.

Inspired by the Yuan steplength, we suggest a simple way of modifying steepest descent model to a cyclic gradient method.
Consider a steplength of the form
\begin{equation}
\label{eq:yb}
\alpha_k^\text{YB} =
\begin{cases}
\alpha_k^\text{SD}, & k\bmod 3 = 0 \text{ or } 2, \\
\alpha_k^\text{Y}, & k\bmod 3 = 1.
\end{cases}
\end{equation}
Here we modify the order of SD and Y compared to the original YB formula, which is useful for the development of the new algorithm.
Apart from this change, \eqref{eq:yb} is indeed the second algorithm propose by the pioneering work of Yuan \cite{Yuan2006}.
It keeps the two-dimensional finite termination property that performs as well as BB for large-scale problems and better for small-scale problems.
We could introduce simply the cyclic behavior based on \eqref{eq:yb} of the form
\begin{equation}
\forall m\in\mathbb{N},\text{ if }k\bmod (3+m) > 2,\text{ then }\alpha_k = \alpha_{k-1}.
\end{equation}
Besides, we find that De Asmundis et al. \cite{DeAsmundis2013} gives an interesting view about the iterations of SD method, where the technique of alignment was proposed therein to force the gradients into one-dimensional subspace and avoid the zigzag pattern.
Notice that the inverse of constant Rayleigh quotient such as SD and BB steplengths has the similar property.
Thus, constant SD with retards can also give rise to the alignment behavior and keep the nonmonotone benefit.
To achieve this goal, we need to impose a repeat time to the zigzag process.
Meanwhile, we want to keep the process based on Yuan steplength in the first several iterations.
These motivations lead to a new method of the form
\begin{equation}
\label{eq:cy}
\alpha_k^\text{CY} =
\begin{cases}
\alpha_k^\text{Y}, & k\bmod (l+m+2) = 1, \\
\alpha_k^\text{SD}, & k\bmod (l+m+2) < l+2, \\
\alpha_{k-1}, & \text{otherwise},
\end{cases}
\end{equation}
where $l\ge 1$ and $m\ge 1$.
Such formula seems complicated, but indeed easy to understand.
There are three components consisting in \eqref{eq:cy}: the first SD and Y are used to insure the finite termination property; the parameter $l$ acting on the second part of SD is used to keep several zigzag iterations; finally, the retard term $m$ induces alignment and provides nonmonotone behavior to leap from the lower subspace.

\section{Convergence Analysis}
\label{sec:3}

By the invariance property under any orthogonal transformation, we can assume without loss of generality that
\begin{equation}
\label{eq:A}
A = \text{diag}(\lambda_1,\ \dots,\ \lambda_n),
\end{equation}
where
\begin{equation}
\label{eq:lambda}
1=\lambda_1\le\dots\le\lambda_n.
\end{equation}
We follow the convergence framework established by Dai \cite{Dai2003a} and adapt it to our method.
Let
\begin{equation}
\label{eq:pa}
G(k,\mu) = \sum_{i=1}^\mu g_{i,k}^2,
\end{equation}
where $g_{i,k}$ is the $i$th component of $g_k$.
A preliminary property is defined as follows.
\begin{definition}[Property A]
Suppose that matrix $A$ has the form \eqref{eq:A} with condition \eqref{eq:lambda} holds.
If $\exists \xi\in\mathbb{N}$, $\exists M_1,M_2>0$, such that
$\forall\mu\in\{1,\dots,n-1\}$, $\forall\epsilon>0$, $\forall j\in\{0,\dots,\min\{k,\xi\}-1\}$,
\begin{itemize}
\item $\lambda_1\le\alpha_k^{-1}\le M_1$;
\item if $G(k-j,\mu)\le\epsilon$ and $g_{\mu+1,k-j}^2\ge M_2\epsilon$, then $\alpha_k^{-1}\ge\frac{2}{3}\lambda_{\mu+1}$,
\end{itemize}
then the steplength $\alpha_k$ has Property A.
\end{definition}
\noindent The convergence framework of Dai can be deduced from Property A, stated as follows.
\begin{theorem}[Dai, 2003]
\label{thm:Dai2003}
Consider the linear system \eqref{eq:ls} with $A\in\mathbb{R}^{n\times n}$ of the form \eqref{eq:A} and $b\in\mathbb{R}^n$.
Consider the gradient method \eqref{eq:grd} being used to solve \eqref{eq:ls}.
If the steplength $\alpha_k$ has Property A, then the sequence $\{\norm{g_k}\}$ converges to 0 R-linearly for any starting point $x_0$.
\end{theorem}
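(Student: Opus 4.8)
The plan is to prove, by induction on $\mu\in\{1,\dots,n\}$, that there exist constants $c_\mu>0$ and $\sigma_\mu\in(0,1)$ with $G(k,\mu)\le c_\mu\sigma_\mu^{\,k}$ for all $k\ge0$; the case $\mu=n$ then yields $\norm{g_k}^2=G(k,n)\le c_n\sigma_n^{\,k}$, i.e.\ R-linear convergence of $\{\norm{g_k}\}$. First I would record the component-wise form of \eqref{eq:grd} under \eqref{eq:A}: $g_{i,k+1}=(1-\alpha_k\lambda_i)g_{i,k}$, hence $g_{i,k}=g_{i,0}\prod_{j=0}^{k-1}(1-\alpha_j\lambda_i)$. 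The first bullet of Property A together with $\lambda_1=1$ gives $1/M_1\le\alpha_k\le1$, so $\alpha_k\lambda_i\in[\lambda_i/M_1,\lambda_i]$ and therefore $\abs{1-\alpha_k\lambda_i}\le L_i:=\max\{\lambda_i-1,\,1-1/M_1\}$ for every $i$ and $k$; in particular $L_1=1-1/M_1<1$, which disposes of the base case since $G(k,1)=g_{1,k}^2\le L_1^{2k}g_{1,0}^2$.

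For the inductive step, suppose the claim holds for some $\mu\le n-1$. Because $G(k,\mu+1)=G(k,\mu)+g_{\mu+1,k}^2$ and $G(k,\mu)$ already decays R-linearly, it suffices to show that $g_{\mu+1,k}^2$ does. The key mechanism is the second bullet of Property A applied with $\epsilon=G(k_1,\mu)$ (and a limiting argument $\epsilon\to0^+$ when $G(k_1,\mu)=0$): whenever $g_{\mu+1,k_1}^2\ge M_2\,G(k_1,\mu)$, we get $\alpha_k^{-1}\ge\tfrac23\lambda_{\mu+1}$ for every $k\in\{k_1,\dots,k_1+\xi-1\}$, hence $\alpha_k\lambda_{\mu+1}\in[\lambda_{\mu+1}/M_1,\,3/2]\subseteq[1/M_1,\,3/2]$ (using $\lambda_{\mu+1}\ge\lambda_1=1$), and so $\abs{1-\alpha_k\lambda_{\mu+1}}\le\rho:=\max\{1/2,\,1-1/M_1\}<1$ throughout a window of length $\xi$. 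In other words, a single ``large'' excursion of the $(\mu+1)$st gradient component forces $\xi$ consecutive contraction steps on that component; this is exactly the role of the look-back parameter $\xi$, without which a one-step contraction could immediately be undone by the next step.

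To convert this into geometric decay I would sample the iteration at multiples of $P:=\xi$ and put $a_m:=g_{\mu+1,mP}^2$. At index $mP$ one of two things happens: either $g_{\mu+1,mP}^2<M_2\,G(mP,\mu)$, and then the induction hypothesis plus the crude bound $\abs{1-\alpha_k\lambda_{\mu+1}}\le L_{\mu+1}$ over the following $P$ steps give $a_{m+1}\le L_{\mu+1}^{2P}M_2\,c_\mu\,\sigma_\mu^{\,mP}$; or $g_{\mu+1,mP}^2\ge M_2\,G(mP,\mu)$, and then the forced window covers exactly $\{mP,\dots,(m+1)P-1\}$, giving $a_{m+1}\le\rho^{2P}a_m$. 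In either case $a_{m+1}\le\rho^{2P}a_m+D\,\sigma_\mu^{\,mP}$ for a suitable constant $D$, and the standard estimate for such a scalar recursion yields $a_m\le C'\theta^{\,m}$ for any fixed $\theta\in\bigl(\max\{\rho^{2P},\sigma_\mu^{\,P}\},\,1\bigr)$. Reinstating the intermediate indices with the uniform factor $L_{\mu+1}$ gives $g_{\mu+1,k}^2\le c'\,\sigma_{\mu+1}^{\,k}$, and adding the bound for $G(\cdot,\mu)$ completes the induction; the finitely many boundary indices (where the window would run past $0$) are absorbed into the constants.

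I expect the principal difficulty to be the bookkeeping in the third paragraph --- reconciling the possible growth by the factor $L_{\mu+1}\ge1$ outside the forced windows with the guaranteed contraction by $\rho<1$ inside them, and in particular checking that sampling with stride exactly $P=\xi$ makes a ``large'' value at $mP$ control the value at $(m+1)P$. The remaining ingredients --- the component-wise recursion, the two estimates extracted from Property A, and solving $a_{m+1}\le\rho^{2P}a_m+D\sigma_\mu^{\,mP}$ --- are routine.
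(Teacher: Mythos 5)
The paper does not prove this theorem --- it simply cites Dai (2003) --- so there is no in-paper argument to compare against; measured against the standard proof in that reference, your proposal is essentially the same: induction on the number of leading components $\mu$, the dichotomy at each sampled index between ``$g_{\mu+1,k}^2$ already controlled by $M_2\,G(k,\mu)$'' and ``Property A forces $\xi$ consecutive contraction factors $\abs{1-\alpha_k\lambda_{\mu+1}}\le\max\{1/2,\,1-1/M_1\}$,'' followed by the scalar recursion $a_{m+1}\le\rho^{2P}a_m+D\sigma_\mu^{mP}$. The outline is correct, including the treatment of the boundary index $k=0$ and the choice of $\epsilon$ in the second bullet.
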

For a proof of the above theorem, see \cite{Dai2003a}.
Many gradient methods have Property A as mentioned in \cite{Dai2003a}, e.g., the gradient method with retards \eqref{eq:gmr}.
Inspired by the demonstration therein, we now develop a convergence result for the CY method.
\begin{theorem}
\label{thm:cv2}
Consider the linear system \eqref{eq:ls} with $A\in\mathbb{R}^{n\times n}$ of the form \eqref{eq:A} and $b\in\mathbb{R}^n$.
Consider the gradient method \eqref{eq:grd} with steplength \eqref{eq:cy} being used to solve \eqref{eq:ls}.
Then the steplength $\alpha_k^\text{CY}$ has Property A.
\end{theorem}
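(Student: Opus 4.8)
The plan is to verify the two bullet points in the definition of Property A for the steplength $\alpha_k^\text{CY}$, treating the three cases in \eqref{eq:cy} — the Yuan step, the steepest descent step, and the retarded step $\alpha_{k-1}$ — separately, and choosing $\xi = l+m+2$ (the cycle length) so that the ``memory'' window in Property A covers one full cycle. For the first bullet, we must show $\lambda_1 \le (\alpha_k^\text{CY})^{-1} \le M_1$ for a suitable constant $M_1$. For the SD step this is immediate: $(\alpha_k^\text{SD})^{-1} = g_k^\intercal A g_k / g_k^\intercal g_k$ is a Rayleigh quotient of $A$, hence lies in $[\lambda_1,\lambda_n]$. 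For the Yuan step, I would invoke the known fact (from Yuan \cite{Yuan2006} or Dai--Yuan \cite{Dai2005c}) that $(\alpha_k^\text{Y})^{-1}$ also lies between $\lambda_1$ and $\lambda_n$ — this follows from inspecting \eqref{eq:yuan}, since the expression there is built from reciprocals of two consecutive SD steplengths and a nonnegative correction under the square root, and can be bounded below by $\min\{(\alpha_{k-1}^\text{SD})^{-1},(\alpha_k^\text{SD})^{-1}\} \ge \lambda_1$ and above by $\lambda_n$. For the retarded step $\alpha_k = \alpha_{k-1}$, one simply unwinds the recursion: within a single cycle, $\alpha_{k-1}$ equals some earlier SD steplength (the last SD step taken before the retarded block began), so the bound $[\lambda_1,\lambda_n]$ propagates. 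Thus $M_1 = \lambda_n$ works throughout.

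For the second bullet, suppose $G(k-j,\mu)\le\epsilon$ and $g_{\mu+1,k-j}^2 \ge M_2\epsilon$ for some $j \in \{0,\dots,\min\{k,\xi\}-1\}$; we must produce $(\alpha_k^\text{CY})^{-1} \ge \tfrac{2}{3}\lambda_{\mu+1}$. The key structural point is that, by the choice of cycle length $\xi = l+m+2$, the index $k-j$ reaches back at most one full cycle, so the steplength $\alpha_k^\text{CY}$ is determined by a Rayleigh quotient evaluated at a gradient $g_{\tau}$ with $\tau \in \{k-j,\dots,k\}$ — either directly (SD or Yuan case) or via the retard (in which case $\tau$ is the last SD index in the current cycle, again within the window). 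For an SD-type steplength at index $\tau$, we have
\begin{equation}
\label{eq:rq-lower}
(\alpha_\tau^\text{SD})^{-1} = \frac{\sum_i \lambda_i g_{i,\tau}^2}{\sum_i g_{i,\tau}^2} \ge \frac{\lambda_{\mu+1}\sum_{i\ge\mu+1} g_{i,\tau}^2}{\sum_{i\le\mu} g_{i,\tau}^2 + \sum_{i\ge\mu+1} g_{i,\tau}^2},
\end{equation}
so once we control the ratio $\sum_{i\le\mu} g_{i,\tau}^2 \big/ g_{\mu+1,\tau}^2$ from above, the quotient exceeds $\tfrac{2}{3}\lambda_{\mu+1}$. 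The hypothesis gives this control at index $k-j$; to transfer it to index $\tau$ I would use the standard estimate that each gradient component evolves multiplicatively, $g_{i,k+1} = (1-\alpha_k\lambda_i)g_{i,k}$, so over the bounded number of steps ($\le \xi$) separating $k-j$ from $\tau$, the low components $\sum_{i\le\mu}g_{i,\cdot}^2$ grow by at most a fixed factor (depending only on $\lambda_n$, $M_1$, and $\xi$) while the component $g_{\mu+1,\cdot}^2$ changes by a controlled factor as well — here one may need $1-\alpha_k\lambda_{\mu+1}$ bounded away from $0$, or else handle the degenerate case $g_{\mu+1,\tau}=0$ separately, noting that then $G(\tau,\mu+1)$ has already collapsed. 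Choosing $M_2$ large enough (as a function of $\lambda_n$ and $\xi$ only) then forces \eqref{eq:rq-lower} $\ge \tfrac{2}{3}\lambda_{\mu+1}$. The Yuan case is handled by the same Rayleigh-quotient-style lower bound together with the already-cited bracketing of $(\alpha_k^\text{Y})^{-1}$ between the two adjacent inverse SD steplengths.

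I expect the main obstacle to be the second bullet in the retarded and Yuan cases, specifically the bookkeeping needed to guarantee that the relevant Rayleigh quotient is evaluated at an index still inside the Property-A memory window — this is exactly why the cycle length $l+m+2$ must be chosen as $\xi$, and why the modified ordering in \eqref{eq:yb} (SD appearing at position $k\bmod(l+m+2) \in \{0,2,\dots\}$ rather than at the cycle boundary) matters: it ensures the last SD step feeding the retarded block is never more than $m+1$ positions away from the current iterate. A secondary technical nuisance is the possible vanishing of $g_{\mu+1}$ along the window, which I would dispose of by observing that if $g_{\mu+1,\tau}=0$ for the relevant $\tau$ then the hypothesis $g_{\mu+1,k-j}^2\ge M_2\epsilon>0$ combined with the multiplicative update forces $\alpha_{k'}\lambda_{\mu+1}=1$ for some intermediate $k'$, pinning down that steplength exactly and making the bound trivial. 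Once these cases are organized, the remaining computations are the routine Rayleigh-quotient manipulations sketched above, after which Theorem~\ref{thm:Dai2003} yields R-linear convergence of $\{\norm{g_k}\}$ as an immediate corollary.
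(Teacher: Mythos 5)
Your overall strategy (verify the two bullets of Property A case by case, using the Yuan bracketing $\alpha_k^\text{Y} < \min\{\alpha_{k-1}^\text{SD}, \alpha_k^\text{SD}\}$ and Rayleigh-quotient bounds) is in the right spirit, but it diverges from the paper's proof in a way that creates real difficulties, and it contains one outright error. The error first: for the Yuan step the constant $M_1=\lambda_n$ is false. From \eqref{eq:yuan} one only gets $(\alpha_k^\text{Y})^{-1} < (\alpha_{k-1}^\text{SD})^{-1} + (\alpha_k^\text{SD})^{-1} \le 2\lambda_n$, and since $(\alpha_k^\text{Y})^{-1}$ strictly exceeds $\max\{(\alpha_{k-1}^\text{SD})^{-1},(\alpha_k^\text{SD})^{-1}\}$ it can indeed exceed $\lambda_n$; the paper takes $M_1 = 2\lambda_n$. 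This is harmless for Property A (any finite $M_1$ will do) but your stated bound is wrong as written.

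The genuine gap is in your treatment of the second bullet. By choosing $\xi = l+m+2$ you commit yourself to transferring the smallness of $G(\cdot,\mu)$ and the largeness of $g_{\mu+1,\cdot}^2$ across up to a full cycle via the multiplicative update $g_{i,k+1}=(1-\alpha_k\lambda_i)g_{i,k}$. The obstruction you yourself flag --- that $|1-\alpha_{k'}\lambda_{\mu+1}|$ may be arbitrarily small, so that $g_{\mu+1}^2$ collapses between $k-j$ and the index $\tau$ at which the Rayleigh quotient is evaluated --- is not disposed of by your remark: pinning down $\alpha_{k'}\lambda_{\mu+1}=1$ at some intermediate $k'$ is a statement about the steplength at $k'$, not the required lower bound on $(\alpha_k)^{-1}$ at the current index $k$, so the conclusion is not ``trivial'' there. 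As sketched, the transfer argument does not close. The paper sidesteps all of this: it observes that the SD steps and the retarded steps $\alpha_k=\alpha_{k-1}$ are instances of the gradient-method-with-retards formula \eqref{eq:gmr} (with $\rho(k)=0$ and an admissible $\tau(k)$), for which Property A is already established in \cite{Dai2003a}, so only the Yuan step needs checking. For that step it takes $\xi=1$, hence $j=0$, and uses $(\alpha_k^\text{Y})^{-1} > (\alpha_k^\text{SD})^{-1}$ to reduce the second bullet to the Rayleigh quotient of $A$ at the \emph{current} gradient $g_k$, where the hypotheses $G(k,\mu)\le\epsilon$ and $g_{\mu+1,k}^2\ge 2\epsilon$ immediately give the $\tfrac{2}{3}\lambda_{\mu+1}$ bound with $M_2=2$; no propagation across the cycle is needed. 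To salvage your route you would either have to import Dai's window lemma controlling the growth of $G(\cdot,\mu)$ relative to $g_{\mu+1,\cdot}^2$ (essentially re-proving the GMR case), or simply adopt the paper's reduction.
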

\begin{proof}
Note that \eqref{eq:cy} has three alternate steplengths, whereas the SD updating process and the constant process using the last SD steplength both follow the framework \eqref{eq:gmr}, which has been proven to have the Property A \cite{Dai2003a}.
Therefore, we only investigate the Yuan steplength.

Recall that Yuan steplength has the following property
\begin{equation}
\left(\frac{1}{\alpha_{k-1}^\text{SD}} + \frac{1}{\alpha_k^\text{SD}}\right)^{-1} < \alpha_k^\text{Y} < \min\left\{\alpha_{k-1}^\text{SD},\ \alpha_k^\text{SD}\right\},
\end{equation}
which is given in \cite{Yuan2006}.
Hence,
\begin{equation}
\lambda_1 \le \frac{1}{\alpha_k^\text{SD}} < \frac{1}{\alpha_k^\text{Y}} < \frac{1}{\alpha_{k-1}^\text{SD}} + \frac{1}{\alpha_k^\text{SD}} \le 2\lambda_n.
\end{equation}
Then the first condition of Property A holds by setting $M_1 = 2\lambda_n$.
For the second one, let $M_2 = 2$ and $\xi = 1$, which yields $j = 0$.
Suppose that
\begin{equation}
G(k,\mu)\le\epsilon,\quad g_{\mu+1,k}^2\ge M_2\epsilon,
\end{equation}
for all $\mu\in\{1,\dots,n-1\}$, and $\epsilon>0$.
Hence, the inverse of Yuan steplength becomes
\begin{equation}
\begin{split}
\frac{1}{\alpha_k^\text{Y}} & > \frac{1}{\alpha_k^\text{SD}} = \frac{g_k^\intercal Ag_k}{g_k^\intercal g_k} = \frac{\sum_{i=1}^n \lambda_i g_{i,k}^2}{\sum_{i=1}^n g_{i,k}^2} \\
& \ge \frac{\lambda_{\mu+1}\sum_{i=\mu+1}^n g_{i,k}^2}{\sum_{i=1}^\mu g_{i,k}^2 + \sum_{i=\mu+1}^n g_{i,k}^2} \\
& \ge \frac{\lambda_{\mu+1}}{\frac{\sum_{i=1}^\mu g_{i,k}^2}{g_{\mu+1,k}^2} + 1} \\
& \ge \frac{\lambda_{\mu+1}}{\frac{\epsilon}{2\epsilon}+1} = \frac{2}{3}\lambda_{\mu+1}
\end{split}
\end{equation}
Hence, the second condition of Property A is satisfied, which completes the proof.
\end{proof}

\section{Numerical Results}
\label{sec:4}

We first address the issue of parallel implementation.
The dot product is engaged in the computation of steplength, which is the major obstacle of parallelization.
Here we have two strategies to realize this goal.
Let $A_i$ be the band matrix stored in the $i$th processor.
The first one (Gather Algorithm, GA) is to gather the vector $q_i = A_i * g$ and execute dot product with global vectors, shown as follows
\begin{algorithmic}
\STATE $\texttt{Allgatherv}(q,\ q_i)$
\STATE $\alpha = \texttt{Dot}(g,\ g)\ /\ \texttt{Dot}(g,\ q)$
\end{algorithmic}
Then, the second one (Reduce Algorithm, RA) consists in computing the dot product locally, shown as follows
\begin{algorithmic}
\STATE $c_i = \texttt{Dot}(g_i,\ q_i)$
\STATE $\texttt{Allreduce}(c,\ c_i,\ \texttt{SUM})$
\STATE $\alpha = \texttt{Dot}(g,\ g)\ /\ c$
\end{algorithmic}
Besides, we can see that global gradient vector is used in each iteration that we must proceed another \texttt{Allgatherv} function to communicate with other processor.
\begin{figure}[!ht]
\centering
\includegraphics[width=3.6in]{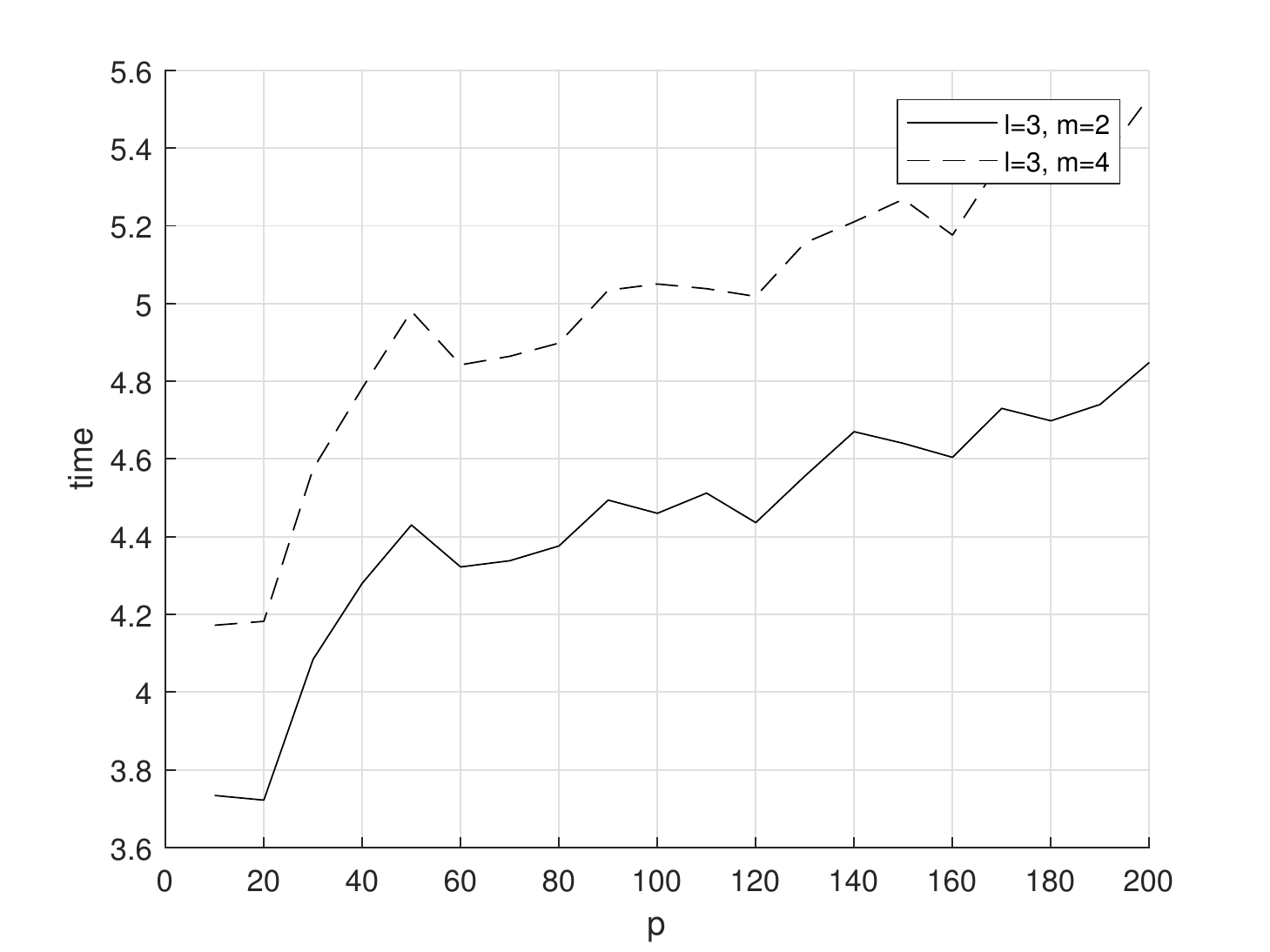}
\caption{Parallel CY method with GA implementation}
\label{fig:1}
\end{figure}
\begin{figure}[!ht]
\centering
\includegraphics[width=3.6in]{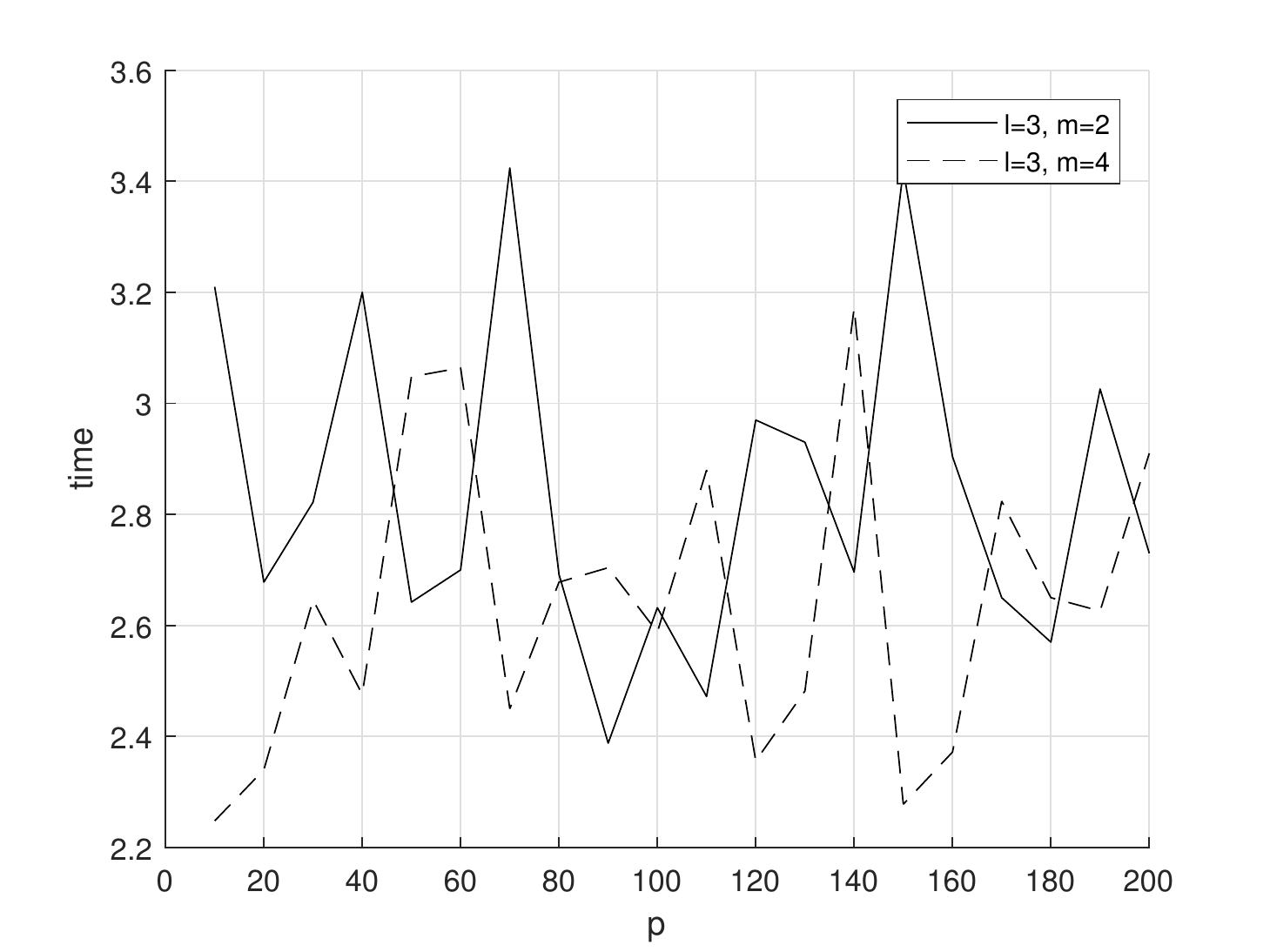}
\caption{Parallel CY method with RA implementation}
\label{fig:2}
\end{figure}
Let $p$ be the number of processors.
The two experiments are proceeded by Alinea \cite{Magoules2015a} (see also, e.g., \cite{Magoules2017a, Magoules2018c, Magoules2018e}) and JACK \cite{Magoules2017b, Magoules2018b} (see also, e.g., \cite{Magoules2018a}) and results are illustrated in Figures~\ref{fig:1} and~\ref{fig:2}.
We can see that generally the results are not good because the first one imposes so much computation and communication load, while the second one causes indeed the problem of loss of precision.
These problems exist in all projection methods and by now we have not yet managed to find a solution.

The second experiments are proceeded by Matlab R2017b with a large-scale problem provided by The SuiteSparse Matrix Collection \cite{Davis2011}, with $n=50000$ and $349968$ non-zero values.
All parameters are chosen under a training problem given in \cite{Barzilai1988}, such that $l=4,m=3$ for CY, $m=3$ for CSD, and $m=4$ for CBB.
Average results are shown in Table~\ref{tab:1}.
\begin{table}[!t]
\renewcommand{\arraystretch}{1.3}
\caption{Gradient methods with different residual threshold}
\label{tab:1}
\centering
\begin{tabular}{|c||c|c|c|c|c|c|}
\hline
& $10^{-1}$ & $10^{-2}$ & $10^{-3}$ & $10^{-4}$ & $10^{-5}$ & $10^{-6}$ \\
\hline
CG & $58$ & $735$ & $2617$ & $\textbf{4251}$ & $\textbf{5786}$ & $\textbf{7535}$ \\
\hline
CY & $\textbf{13}$ & $\textbf{208}$ & $\textbf{1153}$ & $\textbf{4275}$ & $\textbf{6181}$ & \textbackslash \\
\hline
CSD & $27$ & $212$ & $\textbf{1470}$ & $4357$ & $6713$ & \textbackslash \\
\hline
CBB & $31$ & $241$ & $1965$ & $7534$ & \textbackslash & \textbackslash \\
\hline
DY & $\textbf{15}$ & $\textbf{200}$ & $1595$ & $6415$ & \textbackslash & \textbackslash \\
\hline
BB1 & $356$ & $984$ & $2494$ & $4612$ & $8633$ & \textbackslash \\
\hline
SD & $61$ & $5773$ & \textbackslash & \textbackslash & \textbackslash & \textbackslash\\
\hline
\end{tabular}
\end{table}
We use bold numbers indicating the most efficient algorithms under each residual threshold.
Backslash represents a number of iterations more than $10000$.
From Table~\ref{tab:1}, we see that the CY method performs better than other methods except CG.
Although we do not yet beat CG in high precision, our method is still competitive in most cases.
Specifically, we can see that CY is stable throughout the iterations and much better than CG when low accuracy is required.

\section{Conclusions}
\label{sec:5}

In this paper, we have proposed a new gradient method and shown that it is very competitive with CG method and better than the others for large-scale problems.
However, it is still lack of theoretical evidence supporting such results.
It is also important to find a better parallelization strategy.
Therefore it still remains to study the properties and high performance implementation of gradient methods.

\section*{Acknowledgment}
This work was supported by the French national programme LEFE/INSU and the project ADOM (M\'ethodes de d\'ecomposition de domaine asynchrones) of the French National Research Agency (ANR).

\bibliography{ref}
\bibliographystyle{abbrv}

\end{document}